\theoremstyle{plain}
\newtheorem{lemma}{Lemma}
\newtheorem{corollary}[lemma]{Corollary}
\newtheorem{theorem}[lemma]{Theorem}
\newtheorem{proposition}[lemma]{Proposition}
\theoremstyle{definition}
\newtheorem{definition}[lemma]{Definition}
\numberwithin{equation}{section}
\numberwithin{lemma}{section}
\DeclareMathOperator{\Aut}{Aut}
\DeclareMathOperator{\AGL}{AGL}
\DeclareMathOperator{\Rep}{Rep}
\DeclareMathOperator{\Sym}{Sym}
\DeclareMathOperator{\diff}{diff}
\DeclareMathOperator{\PermAut}{PermAut}
\definecolor{blue}{rgb}{0,0.4,0.6}
\definecolor{ccqqqq}{rgb}{0.8,0,0}
\definecolor{qqttcc}{rgb}{0,0.2,0.8}
\definecolor{qqwwzz}{rgb}{0,0.4,0.6}
\definecolor{shadecolor}{rgb}{0,0.4,0.6}
\definecolor{qqqqff}{rgb}{0,0.4,0.6}
\renewcommand{\b}{\underline}
\newcommand{\F}{\mathbb F}
\newcommand{\D}{\mathcal D}
\newcommand{\PG}{\mathcal{PG}}
\newcommand{\EG}{\mathcal{EG}}
\newcommand{\G}{\mathcal{G}_{23}}
\renewcommand{\P}{\mathcal P}
\newcommand{\RM}{\mathcal{RM}}
\begin{document}      

\title{\texorpdfstring{$s$}{s}-Elusive Codes in Hamming Graphs
}

\author{
 Daniel R. Hawtin
 \thanks{
  This research was partially supported by an Australian Postgraduate Award and a University of Western Australia Safety-Net Top-Up Scholarship. This work has been supported in part by the Croatian Science Foundation under project 6732.
 }
 \thanks{
  The author thanks Neil Gillespie, Cheryl Praeger and Andrea {\v S}vob for kindly reading drafts of this manuscript, and two anonymous referees for their helpful comments on a previous version.
 }
}

\date{
 \small{
  \emph{
   Department of Mathematics, University of Rijeka\\
   Rijeka, Croatia, 51000\\
   \href{mailto:dan.hawtin@gmail.com}{dan.hawtin@gmail.com}\\
   \vspace{0.25cm}
  }
  \today
 }
}

\maketitle

\begin{abstract}
A \emph{code} is a subset of the vertex set of a \emph{Hamming graph}. The \emph{set of $s$-neighbours} of a code is the set of all vertices at Hamming distance $s$ from their nearest codeword. A code $C$ is \emph{$s$-elusive} if there exists a distinct code $C'$ that is equivalent to $C$ under the full automorphism group of the Hamming graph such that $C$ and $C'$ have the same set of $s$-neighbours.

We show that the minimum distance of an $s$-elusive code is at most $2s+2$, and that an $s$-elusive code with minimum distance at least $2s+1$ gives rise to a $q$-ary $t$-design with certain parameters. This leads to the construction of: an infinite family of $1$-elusive and completely transitive codes, an infinite family of $2$-elusive codes, and a single example of a $3$-elusive code. Answers to several open questions on elusive codes are also provided.
\end{abstract}

\section{Introduction}\label{introd}

A \emph{code} in a Hamming graph $\varGamma=H(m,q)$ is a subset $C$ of its vertex set $V\varGamma$. The elements of $C$ are called \emph{codewords} and the automorphism group of $C$ is the setwise stabiliser of $C$ in the full automorphism group of $H(m,q)$. An $s$-neighbour of $C$ is a vertex $\alpha$ whose nearest codeword in $C$ is Hamming distance $s$ from $\alpha$. A code $C$ is called \emph{$s$-elusive} if there exists an equivalent code $C'$ to $C$ such that the sets of $s$-neighbours of $C$ and $C'$ are the same. Note that the notion of equivalence used here is more general than the standard one; see Section~\ref{notation}.

The concept studied here is a generalisation of one originally studied in \cite{ntrcodes}. We consider the question of whether, given a code $C$ in a Hamming graph $H(m,q)$, the automorphism group $\Aut(C_s)$ of the set $C_s$ of $s$-neighbours could be larger than the automorphism group $\Aut(C)$ of the code itself (see Section~\ref{notation}). This question was encountered, for $s=1$, when Gillespie and Praeger were deciding upon the definition for a neighbour-transitive code (see \cite{neilphd}). In \cite{ntrcodes} they give an affirmative answer via the construction of an infinite family of examples. Similarly, the significance of the existence of $s$-elusive codes relates to the precise definition of $s$-neighbour-transitive codes (see \cite{ef2nt,aas2nt,minimal2nt,ondimblock}).

Theorem~\ref{maintheorem} exhibits examples of $s$-elusive codes, for $s=1,2$ and $3$. The definition of the relevant Reed-Muller codes is given at the beginning of Section~\ref{reedmuller}, and can be found for instance in \cite[Section 5.4]{assmus1994designs}; the definition of the Preparata codes can be found in \cite[(16.12)]{cameron1991designs}. Part 1 of Theorem~\ref{maintheorem} is proved in Section~\ref{reedmuller} and the remaining parts in Section~\ref{selusive}. Note that a code $C$ is \emph{$G$-completely transitive}, for a group $G\leq \Aut(C)$, if each $C_i$ is a $G$-orbit, for $ i\in \{0,\ldots, \rho\}$, where $\rho$ is the covering radius of $C$ (see, for instance, \cite{giudici1999completely}), and we say that a linear code $C$ of length $m$, dimension $\ell$ and minimum distance $\delta$ has \emph{parameters} $[m,\ell,\delta]$. Note also that each code appearing in Theorem~\ref{maintheorem} is known to be completely regular with covering radius $2s$ (see \cite{borges2019completely}), and that the result that $\RM_q(k,d)$ is completely transitive is new, to the best of the authors knowledge.

\begin{theorem}\label{maintheorem}
 \begin{enumerate}
  \item The Reed-Muller codes $\RM_q(k,d)$, where $q$ is a prime power and $k=(q-1)d-2$, are completely transitive and $1$-elusive with parameters $[q^d,q^d-d-1,\delta]$ where $\delta=4$ when $q=2$ and $\delta=3$ otherwise.
  \item The Preparata codes $\P(2d)$ in $H(2^{2d},2)$ (which are non-linear) are $2$-elusive with minimum distance $\delta=6$ and size $2^{2^{2d}-4d}$.
  \item The punctured code of the even weight subcode of the perfect binary Golay code is $3$-elusive with parameters $[22,11,7]$.
 \end{enumerate}
\end{theorem}

For a code $C$ to be $s$-elusive, there must be an automorphism $x\in \Aut(C_s)\setminus \Aut(C)$. It follows that $C^x$ and $C$ are not equal, but are equivalent codes, each with the same $s$-neighbour set $C_s$. As such, given knowledge only of the $s$-neighbour set and minimum distance of an $s$-elusive code, knowledge of the code itself remains elusive. Whether such codes exist seems to be related to the \emph{minimum distance} $\delta$ of the code, namely the smallest distance between two distinct codewords.  In \cite{ntrcodes} it is shown that (i) if $C$ is a $1$-elusive code then it has minimum distance $\delta\leq 4$, (ii) that if $\delta=4$ then $q=2$, and (iii) an infinite family of binary $1$-elusive codes with $\delta=4$ exists.

Requiring $\delta\geq 2s+1$ in what follows avoids certain trivial cases and technicalities, making some interesting results possible. In particular, Theorems~\ref{design} and~\ref{mindistupbound} together generalise \cite[Theorem~1]{ntrcodes}, showing that the minimum distance of an $s$-elusive code is at most $2s+2$, and that any $s$-elusive code with minimum distance at least $2s+1$ has a set of $q$-ary $s$-$(m,2s,1)$ designs associated to it. Note that the latter fact allowed for the identification of those codes in Parts 2 and 3 of Theorem~\ref{maintheorem}. Designs often arise as subsets of codes. For instance, \cite[Theorem~2.12]{borges2019completely} states that the set of all weight $k$ vertices of a completely regular code having minimum distance $\delta$ in $H(m,q)$ form a $q$-ary $\left\lfloor \frac{\delta}{2}\right\rfloor$-$(m,k,\lambda_k)$ design, for some ineteger $\lambda_k$.

In \cite{elusive}, for each $q\geq 3$, an infinite family of $1$-elusive codes with $\delta=3$ in $H(m,q)$ was constrcuted. It was observed in that paper that for all known examples the length $m$ of the code is divisible by the alphabet size $q$. In \cite[Question 1.3]{elusive} it was asked whether this was true in general. This holds in the binary case, by \cite[Theorem 1]{ntrcodes}, since this implies that $m(q-1)=m$ must be even, regardless of $\delta$. The author thanks Andries Brouwer for sending in private correspondence \cite{brouwersElusive} the basis of the beautiful argument contained in Section~\ref{divis}. This argument shows that the answer to the question is `yes', that is, for an $s$-elusive code to exist in $H(m,q)$ it must be that $q$ divides $m$. This generalises and simplifies \cite[Theorem~1.2]{structure} in the unpublished manuscript of the author.

The family $\RM_q(k,d)$ of $1$-elusive codes, as in Part 1 of Theorem~\ref{maintheorem}, provides answers to further questions raised in \cite{elusive}.
\begin{enumerate}
 \item In that paper there are only two images of each example code $C$ under $\Aut(C_1)$; \cite[Question 1.4]{elusive} asks if this is always the case.
 \item A code $C$ is $G$-\emph{neighbour-transitive} if each of the sets $C$ and $C_1$ are $G$-orbits for some group $G$. In \cite[Question 1.5]{elusive} it is asked whether the images under $\Aut(C_1)$ of a $1$-elusive code $C$ which is $\Aut(C)$-neighbour-transitive must be pairwise disjoint.
\end{enumerate}

\begin{theorem}\label{answerforques}
 Let $C=\RM_q(k,d)$, as in Part 1 of Theorem~\ref{maintheorem}. If $q$ is a power of the prime $p$ then:
 \begin{enumerate}
  \item there are at least $p$ distinct images of $C$ under $\Aut(C_1)$; and,
  \item there exists some $x\in \Aut(C_1)\setminus \Aut(C)$ such that $\b 0 \in C\cap C^x$.
 \end{enumerate}
\end{theorem}

It is of note that studying the $s$-neighbour set of a code, usually when $s$ is equal the covering radius $\rho$, arises in cryptography. \emph{Bent functions} are functions with ``maximal non-linearity'', which turns out to be the same as being a vertex in $H(q^d,q)$ at distance $\rho$ from the first order Reed-Muller code $\RM_q(1,d)$; see \cite[Chapter~14, Section~5]{macwilliams1978theory}, or \cite{carlet2006hyper,tokareva2015bent} for extensions of this concept.

The next section introduces some notation, Section~\ref{divis} answers \cite[Question 1.3]{elusive}, before Sections~\ref{reedmuller} and~\ref{selusive} provide the proof of Theorem~\ref{maintheorem}.

\section{Preliminaries}\label{notation}

Let the two sets $M$ and $Q$ have sizes $m$ and $q$ respectively. For any set $S$ with $0\in S$ write $S^\times=S\setminus \{0\}$. The vertex set of the Hamming graph $\varGamma=H(m,q)$ consists of all $m$-tuples with entries labelled by the set $M$ and taken from the set $Q$. An edge exists between two vertices if they differ as $m$-tuples in exactly one position. For vertices $\alpha,\beta\in\varGamma$ the \textit{Hamming distance} $d(\alpha,\beta)$ (that is the distance in $\varGamma$) is the number of entries in which $\alpha$ and $\beta$ differ.

For any vertex $\alpha\in\varGamma$, the \emph{set of $r$-neighbours} of $\alpha$ is $\varGamma_r(\alpha)=\{\beta\in\varGamma \mid d(\alpha,\beta)=r\}$. The set of entries in which $\alpha,\beta\in\varGamma$ differ is $\diff(\alpha,\beta)=\{i\in M\mid \alpha_i\neq\beta_i\}$.

Let $C$ be a code in $H(m,q)$. Then the \emph{minimum distance} of $C$ is $\delta=\min\{d(\alpha,\beta)\mid \alpha,\beta\in C,\alpha\neq \beta\}$. For a vertex $\alpha$ of $\varGamma$, define $d(\alpha,C)=\min\{d(\alpha,\beta) \mid \beta\in C\}$. Then the \textit{covering radius} $\rho =\max\{d(\alpha,C)\mid\alpha\in\varGamma\}$. As in Section~\ref{introd}, for any $r\leq \rho$ let $C_r=\{\alpha\in\varGamma \mid d(\alpha,C)=r\}$. Note that if $\delta\geq 2r$, then the set of $r$-neighbours $C_r$ of the code $C$ satisfies $C_r=\cup_{\alpha\in C}\varGamma_r(\alpha)$ and if $\delta\geq 2r+1$ this is a disjoint union.

The \emph{repetition code} $\Rep(m,q)$ in $H(m,q)$ is the code consisting of all $m$-tuples $(a,\ldots,a)$ where $a\in Q$. A code $C$ is \emph{linear} if $Q\cong \F_q$ and $C$ is a subspace of the vertex set $V\varGamma\cong\F_q^m$. If $C$ is a linear code then $\Aut(C)$ contains the subgroup $T_C$ consisting of all translations $t_\alpha$, where $\alpha\in C$, defined by $\beta\mapsto \alpha+\beta$ for all $\beta\in V\varGamma$. We denote the dual of a linear code $C$ under the standard inner product by $C^\perp$. The code $\Rep(m,2)$ in $H(m,2)$ is linear and its dual $\Rep(m,2)^\perp$ is the code consisting of all vertices of even weight. The \emph{even-weight subcode} of any code $C$ in $H(m,2)$ is given by $C\cap \Rep(m,2)^\perp$.

Let $S_n$ denote the symmetric group on $\{1,\ldots,n\}$. The automorphism group $\Aut(\varGamma)$ of the Hamming graph is the semi-direct product $B\rtimes L$, where $B\cong S_q^m$ and $L\cong S_m$ (see \cite[Theorem 9.2.1]{brouwer}). Let $g=(g_1,\dots,g_m)\in B$, $\sigma\in L$ and $\alpha=(\alpha_1,\ldots,\alpha_m)\in\varGamma$. Then $g$ and $\sigma$ act on $\alpha\in \varGamma$ as follows:
\begin{equation*}
\alpha^g =(\alpha_1^{g_1},\ldots,\alpha_m^{g_m})\quad\text{and}\quad
\alpha^\sigma=(\alpha_{1{\sigma^{-1}}},\ldots,\alpha_{m{\sigma^{-1}}}).
\end{equation*}

The automorphism group of a code $C$ in $\varGamma=H(m,q)$ is $\Aut(C)=\Aut(\varGamma)_C$, the setwise stabiliser of $C$ in $\Aut(\varGamma)$. The group of \emph{pure permutations} on entries is $\PermAut(C)=\Aut(C)\cap L$. This notation will be used for any subset of vertices, in particular the automorphism group of the set of $r$-neighbours of $C$ is $\Aut(C_r)=\Aut(\varGamma)_{C_r}$. 

Two codes, $C$ and $C'$, in $H(m,q)$, are \textit{equivalent} if there exists $x\in \Aut(\varGamma)$ such that $C^x=C'$. Equivalence preserves minimum distance. (See \cite[Lemma 4]{ntrcodes}).

\section{Alphabet Size Divides Length}\label{divis}

The \emph{adjacency matrix} of a graph has rows and columns indexed by the vertices of the graph, with an entry $1\in\mathbb{R}$ if the corresponding vertices are adjacent and $0\in\mathbb{R}$ otherwise. Let $A$ be the adjacency matrix of the Hamming graph. A subset of the vertex set of a graph, and hence a code $C$, can be represented by a \emph{characteristic vector} $\chi(C)$, where the entries are labelled by the vertices of the graph and take the value $1\in\mathbb{R}$ if the vertex is in $C$ and $0\in\mathbb{R}$ otherwise. It follows that $A\cdot\chi(C)$ is related to the characteristic vector of $C_1$, the entry of $A\cdot\chi(C)$ corresponding to the vertex $\beta$ takes the value $|\varGamma_1(\beta)\cap C|$. In particular, if $\delta\geq 3$ then each element of $C_1$ is distance $1$ from a unique codeword, and hence $A\cdot\chi(C)=\chi(C_1)$. To generalise this, note that the value of $A^s$ in the $i$-th column and $j$-th row gives the number of paths of length $s$ between the vertices $i$ and $j$. Since two vertices at distance $s$ differ in precisely $s$ positions, there are $s!$ paths of length $s$ between them. Also, if $\delta\geq 2s+1$ then each element of $C_s$ is distance $s$ from a unique codeword. Hence, $A^s\cdot \chi(C)=s!\chi(C_s)$. Note that, in general, $K_s(A)\cdot \chi(C)=\chi(C_s)$, where $K_s$ is a \emph{Krawtchouk polynomial}, but here the condition $\delta\geq 2s+1$ allows this expression to be simplified.

%
%

\begin{proposition}\label{twocodes}
 Let $s\in\{1,\ldots,\rho\}$ and suppose that there exist distinct codes $C$ and $C'$ in $H(m,q)$ such that $C_s=C'_s$, with both $C$ and $C'$ having minimum distance at least $2s+1$. Then $q$ divides $m$.
\end{proposition}

\begin{proof}(Basis of this argument comes from \cite{brouwersElusive})
 Let $A$ be the adjacency matrix of the Hamming graph $H(m,q)$ and let $u=\chi(C)$, $v=\chi(C')$. Since both $C$ and $C'$ have minimum distance at least $2s+1$, it follows (from the discussion immediately preceding this result) that $A^s u=s!\chi(C_s)=s!\chi(C'_s)=A^s v$. Since $u\neq v$, it follows that $A^s$, and hence also $A$, is singular and has at least one zero eigenvalue. The Hamming graph is the Cartesian product of $m$ copies of the complete graph $K_q$ on $q$ vertices. Thus, by \cite[Theorem~2.3.4]{cvetkovic2008eigenspaces} and the fact that the eigenvalues of $K_q$ are $-1$ and $q-1$, the Hamming graph has eigenvalues $(m-i)(q-1)-i=(q-1)m-iq$, where $0\leq i\leq m$. Since $A$ has an eigenvalue zero this implies $(q-1)m-iq=0$, for some integer $i$, and hence $q\mid m$.
\end{proof}

\begin{corollary}\label{elusiveqdivm}
 Let $C$ be an $s$-elusive code in $H(m,q)$ with $\delta\geq 2s+1$. Then $q$ divides $m$.
\end{corollary}

\begin{proof}
 If $C$ is an $s$-elusive code, then there exists $x\in \Aut(C_s)\setminus \Aut(C)$ such that $C^x\ne C$ but $C_s^x=C_s$. Hence, since $\delta\geq 2s+1$, Lemma~\ref{twocodes} applies with $C'=C^x$.
\end{proof}

\section{Elusive Reed-Muller Codes}\label{reedmuller}

This section concerns Part 1 of Theorem~\ref{maintheorem}, that is, we give an infinite family of $1$-elusive and completely transitive codes. Each code is the dual of a first order $q$-ary Reed-Muller code and is contained in the dual of the repetition code of the respective length.

Fix the following notation throughout this section. Let $q$ be a prime power, $Q=\F_q$ and $M=\F_q^d$, so that $V\varGamma$ is an $\F_q$-vector space. For $\alpha\in V\varGamma$, consider the following equations:
\begin{align}
 \sum_{v\in M}\alpha_v &= 0,\quad\text{and}, \label{alphavs}\\
 \sum_{v\in M} \alpha_v v&= 0.\label{valphavs}
\end{align}
Moreover, fix $k=(q-1)d-2$, as well as:
\[
 C=\RM_q(k,d)) \quad\text{and}\quad C'=\RM_q(k+1,d)=\Rep(q^d,q)^\perp,
\]
in $H(q^d,q)$ (where $\Rep(q^d,q)^\perp$ is the dual of the repetition code). The significance of (\ref{alphavs}) and (\ref{valphavs}) is that $\alpha\in C'$ if and only if $\alpha$ satisfies (\ref{alphavs}), and $\alpha\in C$ if and only if $\alpha$ satisfies both equations (\ref{alphavs}) and (\ref{valphavs}) (see \cite[Section 5.4]{assmus1994designs}).

The next lemma states some well-known facts about $C'$, the dual of the repetition code; see, for instance, \cite{macwilliams1978theory}.

\begin{lemma}\label{hatc}
 The code $C'$ is linear with dimension $q^d-1$, minimum distance $\delta'=2$, covering radius $\rho'=1$ and $|{C'}_1|=(q-1)q^{q^d-1}$.
\end{lemma}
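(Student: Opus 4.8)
The plan is to establish each of the four claimed quantities for the linear code $\RM_q(s,d)$ directly from its single defining equation $\sum_{v\in M}\alpha_v=0$. Linearity is immediate, since the defining condition is a single linear equation over $\F_q$ in the coordinates $\alpha_v$; the solution set is a subspace. The dimension then follows from the rank-nullity theorem: the map $Q^M\to Q$ sending $\alpha\mapsto\sum_v\alpha_v$ is a surjective linear functional (it is nonzero, as it sends a single standard basis vector to $1$), so its kernel has codimension $1$, giving dimension $q^d-1$.

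For the minimum distance, since the code is linear it suffices to find the minimum weight of a nonzero codeword. A single nonzero entry cannot satisfy $\sum_v\alpha_v=0$ unless that entry is $0$, so weight $1$ is impossible; conversely a codeword supported on exactly two coordinates, with values $a$ and $-a$ for $a\neq 0$, does satisfy the equation, so $\hat\delta=2$. For the covering radius $\hat\rho$, I would argue that every vertex $\beta\in Q^M$ lies at distance at most $1$ from the code: given arbitrary $\beta$, let $t=\sum_v\beta_v$; if $t=0$ then $\beta$ is already a codeword, and otherwise changing a single coordinate $\beta_w$ to $\beta_w-t$ produces a codeword at distance $1$. Hence $\hat\rho\le 1$, and since $\hat\delta=2>1$ the code is not the whole vertex set and $\hat\rho\ge 1$, so $\hat\rho=1$.

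Finally, for $|{\RM_q(s,d)}_1|$, I would count the neighbour set directly. Because $\hat\delta=2$, the set of neighbours is $C_1=\bigcup_{\alpha\in C}\Gamma_1(\alpha)$; one must count the vertices $\beta$ at distance exactly $1$ from the code. Equivalently, a vertex $\beta$ lies in $C_1$ precisely when $\sum_v\beta_v\neq 0$, since the covering radius is $1$ and codewords are exactly those with coordinate-sum zero. There are $q^{q^d}$ vertices in total, of which $|C|=q^{q^d-1}$ have coordinate-sum zero, leaving $q^{q^d}-q^{q^d-1}=(q-1)q^{q^d-1}$ vertices in $C_1$.

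The computations here are all routine linear algebra over $\F_q$; the only step requiring a moment's care is the identification of $C_1$ with the set of vertices of nonzero coordinate-sum. This rests on the fact that, with $\hat\rho=1$, the vertex set partitions as $C\sqcup C_1$, so every non-codeword is a neighbour; I expect this to be the main point to state cleanly rather than a genuine obstacle, and it is worth noting explicitly that it uses $\hat\delta\ge 2$ so that $C_1$ is indeed the disjoint union of the balls $\Gamma_1(\alpha)$ as remarked in Section~\ref{notation}.
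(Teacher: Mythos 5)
Your proof is correct, and it reaches the same conclusions by a more self-contained route than the paper. The paper's proof simply observes that $\RM_q(s,d)$ is the dual of the repetition code and cites MacWilliams--Sloane for the well-known facts $\hat\delta=2$ and $\hat\rho=1$, states the dimension without argument, and then performs exactly the counting you do: since $\hat\rho=1$ the vertex set is the disjoint union $\RM_q(s,d)\cup{\RM_q(s,d)}_1$, giving $|{\RM_q(s,d)}_1|=q^{q^d}-q^{q^d-1}$. You instead derive everything from the single defining equation: rank--nullity for the dimension, the absence of weight-$1$ codewords plus an explicit weight-$2$ codeword for $\hat\delta$, and the one-coordinate adjustment $\beta_w\mapsto\beta_w-t$ for $\hat\rho\leq 1$. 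This buys independence from the citation at the cost of a few extra lines; both are fine. One small remark: your closing comment about $C_1$ being the \emph{disjoint} union of the sets $\Gamma_1(\alpha)$ is not needed for your count (which only uses the partition $V\Gamma=C\sqcup C_1$ coming from $\hat\rho=1$), and as stated it is slightly off --- the paper's Section~\ref{notation} notes that $\delta\geq 2$ gives $C_1=\cup_{\alpha\in C}\Gamma_1(\alpha)$ but that disjointness of that union requires $\delta\geq 3$, which fails here since $\hat\delta=2$. Since nothing in your argument depends on that disjointness, this is a cosmetic issue only.
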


The next result is also well known.

\begin{lemma}\label{reedmullerparams}\cite[Corollary 5.5.4 and Theorem 5.4.1]{assmus1994designs}
 The code $C$ has covering radius $\rho=2$, dimension $q^d-(d+1)$, and minimum distance \[\delta=\left\{
 \begin{array}{ll}
 4 & \text{if}\quad q=2,d\geq 2,\\
 3 & \text{if}\quad q\geq 3,d\geq 1.
 \end{array}\right.\]
\end{lemma}

\begin{lemma}\label{neighbourset}
 The sets $C_1$ and ${C'}_1$ of neighbours of $C$ and $C'$ satisfy $C_1={C'}_1$.
\end{lemma}

\begin{proof}
 Now, by Lemma~\ref{reedmullerparams}, $|C|=q^{q^d-(d+1)}$. Since $\delta'=2$ and $C\subset C'$ it follows that ${C}_1\subseteq {C'}_1$. Also, since $\delta\geq 3$, $|C_1|=m(q-1)|C|=q^d(q-1)q^{q^d-(d+1)}=q^{q^d}-q^{q^d-1}$, and thus $C_1={C'}_1$ by Lemma~\ref{hatc}.
\end{proof}

\begin{lemma}\label{rmelusive}
 The Reed-Muller code $C=\RM_q(k,d)$ is a $1$-elusive code.
\end{lemma}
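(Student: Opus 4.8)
The plan is to exhibit a single explicit automorphism $x$ of the neighbour set that fails to stabilise the code, thereby witnessing $x\in X_1\setminus X$. Since the discussion preceding the lemma identifies $X_1=\Aut(\RM_q(s,d))$, it suffices to produce an element of $\Aut(\RM_q(s,d))$ that does not preserve the subcode $\RM_q(s-1,d)$.

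First I would record that the inclusion $\RM_q(s-1,d)\subseteq\RM_q(s,d)$ is strict. By Lemma~\ref{hatc} and the proof of Lemma~\ref{neighbourset}, the two codes have dimensions $q^d-1$ and $q^d-(d+1)$ respectively, which differ by $d\geq 1$. Hence I may fix a vertex $\beta\in\RM_q(s,d)\setminus\RM_q(s-1,d)$; concretely, $\beta$ satisfies $\sum_{v\in M}\beta_v=0$ but $\sum_{v\in M}\beta_v v\neq\b 0$.

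The candidate automorphism is translation by $\beta$: I take $x=(g_v)_{v\in M}\in N$, where the additive shift $g_v\colon t\mapsto t+\beta_v$ is a permutation of $Q=\F_q$, so that $x\in N\cong S_q^M$ is a genuine automorphism of the Hamming graph and $\alpha^x=\alpha+\beta$ for every vertex $\alpha$. Two checks then close the argument. Since $\sum_v\beta_v=0$, translation by $\beta$ maps the sum-zero hyperplane $\RM_q(s,d)$ onto itself, so $x\in\Aut(\RM_q(s,d))=X_1$. On the other hand, $\b 0\in\RM_q(s-1,d)$ while $\b 0^x=\beta\notin\RM_q(s-1,d)$ by the choice of $\beta$; hence $x\notin X$. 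Therefore $x\in X_1\setminus X$, and $\RM_q(s-1,d)$ is elusive.

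This proof is short, so rather than a genuine obstacle the point that most repays care is the uniformity of the second check across all $q$: for arbitrary $\alpha\in\RM_q(s-1,d)$ one has $\sum_v(\alpha+\beta)_v v=\sum_v\alpha_v v+\sum_v\beta_v v=\sum_v\beta_v v\neq\b 0$ by \eqref{valphavs}, so translation by $\beta$ ejects \emph{every} codeword of $\RM_q(s-1,d)$ from the code simultaneously. It is precisely this linearity of $\alpha\mapsto\sum_v\alpha_v v$ that lets a single choice of $\beta$ suffice and avoids any case division on $q$ (in particular the delicate binary case $q=2$, where pure coordinate permutations would not suffice).
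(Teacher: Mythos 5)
Your proof is correct and follows essentially the same route as the paper: both arguments observe that $X_1=\Aut(\RM_q(s,d))$ contains all translations by codewords of the linear code $\RM_q(s,d)$, and that translation by any $\beta\in\RM_q(s,d)\setminus\RM_q(s-1,d)$ fails to stabilise $\RM_q(s-1,d)$. The extra details you supply (the dimension count guaranteeing such a $\beta$ exists, and the linearity of $\alpha\mapsto\sum_v\alpha_v v$ showing every codeword is ejected) are harmless elaborations of the paper's one-line verification.
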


\begin{proof}
 Now $\Aut(C_1)=\Aut(C'))$ because, by Lemma~\ref{neighbourset}, ${C}_1={C'}_1$, and, by Lemma~\ref{hatc}, $V\varGamma=C'\cup {C'}_1$. Since $C'$ is linear, $\Aut(C_1)(=\Aut(C'))$ contains the translation $t_{\alpha}$ by the vertex $\alpha$ for each $\alpha\in C'$. If $\alpha\in C'\setminus C$ then $t_{\alpha}$ does not fix $C$ setwise, so $t_\alpha\notin \Aut(C)$, and hence the image $C^{t_\alpha}\neq C$, so $C$ is $1$-elusive.
\end{proof}

Recall from Section~\ref{notation} that $\PermAut(C)=\Aut(C)\cap L$ is the group of pure permutations on entries fixing the code $C$. By \cite[Theorem 5]{berger1993automorphism}, $\PermAut(C)\cong\AGL(d,q)$. Since $C'$ is the dual of the repetition code in $H(m,q)$, it follows that $\PermAut(\RM_q(k+1,d))\cong S_m$. The proof of Theorem~\ref{answerforques} is below, which provides answers to two open questions regarding elusive codes.

\begin{proof}[Proof of Theorem~\ref{answerforques}]
 If $p$ is the characteristic of the field $\F_q$, then any non-trivial translation in $\Aut(C_1)$ has order $p$. As in the proof of Lemma~\ref{rmelusive} there is a translation in $\Aut(C_1)\setminus \Aut(C)$, so there are at least $p$ distinct images of $C$ under elements of $\Aut(C_1)$. This proves part 1. Note also that $\sigma\in \Aut(C')$ for any $\sigma\in \Sym(M)$, where $\sigma$ acts by permuting entries. However, by \cite[Theorem 5]{berger1993automorphism}, $\sigma\in \PermAut(C)$ if and only if $\sigma\in \AGL(d,q)$. Thus if $\sigma\in \Sym(M)\setminus \AGL(d,q)$, then $C^\sigma\neq C$. However $\b 0\in C^\sigma\cap C$, proving part 2.
\end{proof}

\begin{lemma}\label{reedmullercomptrans}
 The Reed-Muller code $C=\RM_q(k,d)$ is $\Aut(C)$-completely transitive.
\end{lemma}

\begin{proof}
 Since $C$ is linear, $\Aut(C)$ is transitive on $C$. By Lemma~\ref{reedmullerparams}, $C$ has covering radius $2$, so it remains to prove that $\Aut(C)$ acts transitively on $C_1$ and $C_2$. Since $\delta\geq 3$, $\b 0\in C$ and $\Aut(C)$ is transitive on $C$, to prove that $\Aut(C)$ is transitive on $C_1$ it is sufficient to prove $\Aut(C)_{\b 0}$ is transitive on the set of weight one vertices. Let $\nu$ be the weight one vertex with $\nu_i=a\in Q^\times$ for a unique $i\in M$. By \cite[Theorem 5]{berger1993automorphism}, $\PermAut(\RM_q(k,d))\cong \AGL(d,q)$ acting $2$-transitively as pure permutations on entries. Since $C$ is linear $\Aut(C)$ also contains a subgroup isomorphic to the multiplicative group $\F_q^\times$ acting as scalar multiplication. Hence, multiplying by $a^{-1}$ and then applying a permutation of the entries $\sigma\in \Aut(C)$ which maps $i$ to $0\in M$, will map $\nu$ to the weight one vertex $\mu$ with $\mu_0=1$.

 We now prove $\Aut(C)$ is transitive on $\varGamma_2({\b 0})\cap C_2$, which will complete the proof. Recall $C'=\RM_q(k+1,d)$. Now $\varGamma_2(\b 0)\cap C_2$ consists of the weight two vertices $\nu$ with $\nu_i=a\in Q^\times$, $\nu_j=-a$ for distinct $i,j\in M$. To see this, first note that each such vertex $\nu$ satisfies the condition in (\ref{alphavs}), but not the conditions in (\ref{valphavs}) and so $\nu\in C\setminus C'$. By Lemma~\ref{hatc}, $C'$ has minimum distance $2$ and, by Lemma~\ref{neighbourset}, $C_1={C'}_1$, and thus $\nu\in C_2$. Next, suppose $\nu'$ is an arbitrary vertex in $\varGamma_2(\b 0)$, with ${\nu'}_i\neq 0$,${\nu'}_j\neq 0$, for some $i\neq j$. If $\nu_i\neq -\nu_j$ then $\nu\in C_1$ since, by (\ref{valphavs}), $C$ contains the weight three vertex $\alpha\in\varGamma_1(\nu)$ with $\alpha_i=\nu'_i$, $\alpha_j=\nu'_j$ and $\alpha_{i+j}=-{\nu'}_i-{\nu'}_j$. Hence $\nu$ has the form claimed. Finally, we can map $\nu\in\varGamma_2(\b 0)\cap C_2$ to the weight two vertex $\mu$, where $\mu_{0}=1$, $\mu_{e_1}=-1$, by multiplying by $a^{-1}$ and then applying a permutation of entries $\sigma\in \Aut(C)$ which maps the pair $(u,v)$ to $(0,e_1)$.
\end{proof}

Lemmas~\ref{rmelusive} and~\ref{reedmullercomptrans} complete the proof of Part 1 of Theorem~\ref{maintheorem}.

\section{\texorpdfstring{$s$}{s}-Elusive Codes}\label{selusive}


Let $C$ be a code in $H(m,q)$. Recall that $C$ is \emph{$s$-elusive} if $\Aut(C_s)$ is strictly larger than $\Aut(C)$. Note that for any $x\in \Aut(C_s)$ the code $C^x$ is equivalent to $C$, and thus has the same size and minimum distance, and has conjugate automorphism group.

\begin{lemma}\label{sneighbours}
 Let $C$ be an $s$-elusive code and $x\in \Aut(C_s)$. Then $(C_s)^x=(C^x)_s=C_s$.
\end{lemma}

\begin{proof}
 Note that $x\in\Aut(C_s)$ and thus fixes $C_s$ setwise, so it follows that $(C_s)^x=C_s$. It remains to be shown that $(C^x)_s=C_s$. Let $\nu\in C_s$ be distance $s$ from $\alpha\in C$. Then $d(\nu^x,\alpha^x)=s$. Suppose there exists some $\beta\in C^x$ such that $d(\nu,\beta)<s$. Then $d(\nu^{x^{-1}},\beta^{x^{-1}})<s$, however $\beta^{x^{-1}}\in C$, contradicting the fact that $x$ fixes $C_s$ setwise. Hence $\nu\in (C^x)_s$ and thus $(C^x)_s=C_s$, as these sets have the same size.
\end{proof}

If $C$ is an $s$-elusive code then there exists an automorphism $x\in \Aut(C_s)\setminus \Aut(C)$. This implies that $C^x\neq C$, so that there is some codeword $\alpha\in C$ such that $\alpha^x\notin C$.

\begin{definition}
 Let $C$ be an $s$-elusive code in $H(m,q)$, $x\in \Aut(C_s)\setminus \Aut(C)$ and $\alpha\in C$ such that $\alpha^x\notin C$. Then we call the triple $(C,\alpha,x)$ an \emph{$s$-elusive triple}.
\end{definition}

\begin{lemma}\label{unique}
 Let $(C,\alpha,x)$ be an $s$-elusive triple in $H(m,q)$ with $C$ having minimum distance $\delta\geq 2s+1$. Then, for all $\nu\in\varGamma_s(\alpha)$, there exists a unique $\pi\in C_{2s}\cap\varGamma_s(\nu)$ such that $\pi\in C^x$.
\end{lemma}

\begin{proof}
 Since $\delta\geq 2s+1$, the union $C_s=\cup_{\gamma\in C}\varGamma_s(\gamma)$ is disjoint. Now $C^x$ is equivalent to $C$ and, by Lemma~\ref{sneighbours}, $C_s^x=C_s$. Thus each $\nu\in C_s$ is distance $s$ from some vertex $\pi$ in $C^x$. That is, if $\nu\in\varGamma_s(\alpha)$ then there exists some vertex $\pi\in\varGamma_s(\nu)\cap C^x$. Now, $d(\alpha,\pi)\leq d(\alpha,\nu)+d(\nu,\pi)= 2s$ and hence $\pi\notin C$ since $\delta\geq 2s+1$. Moreover, this means $\pi\in C_k$, for some $k$ such that $1\leq k \leq 2s$.

 Suppose $\pi\in C_k$, where $1\leq k < 2s$. Then there exists $\beta\in C$ such that $\pi\in\varGamma_k(\beta)$, in particular there is a path of length $k$ from $\beta$ to $\pi$. Choose a vertex $\mu$ on this path, such that $\mu\in\varGamma_s(\beta)$. Then $\mu\in C_s$, however $d(\pi,\mu)=k-s<s$ contradicting the fact that $C_s^x=C_s$.

 Suppose there exists $\pi'\in \varGamma_s(\nu)\cap C^x$ such that $\pi'\neq \pi$. Then $\pi,\pi'$ are in the code $C^x$ which is equivalent to $C$. However $d(\pi,\pi')\leq d(\pi,\nu)+d(\nu,\pi')=2s$ contradicting $\delta=2s+1$. Thus $\pi$ is unique.
\end{proof}

The next definition introduces the concept of a $q$-ary $t$-design, which helps to describe the structure of an $s$-elusive code. Designs arise in many other contexts, for instance when considering $s$-regular codes \cite{delsarte1973algebraic}. First the notion of \emph{covering} a vertex is required.

\begin{definition}
 Let $0\in Q$ and $\nu,\alpha\in H(m,q)$. The vertex $\nu$ is said to be \emph{covered} by $\alpha$, if $\nu_i=\alpha_i$ for every $i\in M$ such that $\nu_i\neq 0$.
\end{definition}

In other words $\alpha$ \emph{covers} $\nu$ if each non-zero entry of $\nu$ agrees with the corresponding entry of $\alpha$.

\begin{definition}
 A \emph{$q$-ary $t$-$(m,k,\lambda)$ design} consists of a subset $\mathcal{D}\subseteq \varGamma_k(\b 0)$ of weight $k$ vertices of $H(m,q)$ such that each vertex $\nu \in\varGamma_t(\b 0)$ is covered by exactly $\lambda$ vertices of $\mathcal{D}$. When $q=2$, $\D$ is simply a $t$-$(m,k,\lambda)$ design and if additionally $\lambda=1$, $\mathcal{D}$ is called an $S(t,k,m)$ Steiner system.
\end{definition}

There are many examples where designs arise in coding theory. Theorem~\ref{design} should be compared, for example, with \cite[Theorem~2.12]{borges2019completely}, which states that the set of all weight $k$ vertices of a completely regular code having minimum distance $\delta$ in $H(m,q)$ form a $q$-ary $\lfloor \frac{\delta}{2}\rfloor$-$(m,k,\lambda_k)$ design, for some ineteger $\lambda_k$.

\begin{theorem}\label{design}
 Let $(C,\b 0,x)$ be an $s$-elusive triple in $H(m,q)$ with $\delta\geq 2s+1$. Then the set $\varGamma_{2s}(\b 0)\cap C^x$ forms a $q$-ary $s$-$(m,2s,1)$ design. In particular, if $q=2$, then $\varGamma_{2s}(\b 0)\cap C^x$ forms an $S(s,2s,m)$ Steiner system.
\end{theorem}

\begin{proof}
 By Lemma~\ref{unique}, every vertex of $\varGamma_s(\b 0)$ is covered by a unique element of $\varGamma_{2s}(\b 0)\cap C^x$, with respect to $\b 0$ and thus the result follows.
\end{proof}

This gives the following bound for the minimum distance of an $s$-elusive code.

\begin{theorem}\label{mindistupbound}
 Let $C$ be an $s$-elusive code in $H(m,q)$. Then
 \begin{enumerate}
   \item if $q=2$ then $\delta\leq 2s+2$, and,
   \item if $q\geq 3$ then $\delta\leq 2s+1$.
 \end{enumerate}
\end{theorem}

\begin{proof}
 If $\delta\leq 2s$, or $2s+1\geq m$, then the result holds trivially. Suppose $\delta\geq 2s+1$ and $2s+1<m$. Now, there exists some $x\in \Aut(C_s)$ and $\alpha\in C$ such that $\alpha^x\notin C$, where we may assume that $\alpha={\b 0}$. Then, by Theorem~\ref{design}, $\varGamma_{2s}({\b 0})\cap C^x$ forms a $q$-ary $s$-$(m,2s,1)$ design $\D$. Hence, for all $\mu\in\varGamma_s({\b 0})$, there exists some $\beta\in \varGamma_{2s}({\b 0})\cap C^x$ such that $\beta$ covers $\mu$.

 Suppose that $q=2$. Since $2s<m-1$, it follows that there exists some $i\in M$ such that $\beta_i=0$. Thus, there exists $\nu\varGamma_s({\b 0})$ with $\nu_i=1$ and $d(\mu,\nu)=2$. Note that $\beta$ does not cover $\nu$. Hence, there exists some block $\gamma$ of $\D$ covering $\nu$. It then follows from the triangle inequality that
 \[
  d(\beta,\gamma)\leq d(\beta,\mu)+d(\mu,\nu)+d(\nu,\gamma)= 2s+2.
 \]
 As $\beta,\gamma \in C^x$, and $C^x$ is equivalent to $C$, this proves part 1.

 Let $q\geq 3$. Choose $i\in M$ such that $\mu_i\neq 0$. Since $q\geq 3$, there exists an $a\in Q^\times$ such that $\mu_i\neq a$. Let $\nu\in\varGamma_s({\b 0})$ with $\nu_i=a$ and $\nu_j=\mu_j$ for $j\neq i$. Then $\beta$ does not cover $\nu$, so there exists a block $\gamma$ of $\D$ covering $\nu$. It then follows from the triangle inequality that
 \[
  d(\beta,\gamma)\leq d(\beta,\mu)+d(\mu,\nu)+d(\nu,\gamma)= 2s+1.
 \]
 Since $\beta,\gamma \in C^x$, and $C^x$ is equivalent to $C$, this proves part 2.
\end{proof}

The Preparata codes are a family of binary codes of length $2^{2d}$ for each integer $d\geq 2$. In addition to satisfying equations (4.1) and (4.2), codewords of the Preparata codes satisfy one extra non-linear equation. For a full definition see \cite[(16.12)]{cameron1991designs}, taking note that $\bar{\P} (\sigma)$ is denoted as $\P(2d)$ here, with $\sigma$ arbitrary.

\begin{proposition}\label{elusiveprep}
 The Preparata codes $\mathcal{P}(2d)$ are $2$-elusive codes.
\end{proposition}

\begin{proof}
 Let $C=\RM_2(2d,2d)$ and $\P=\P(2d)$. It suffices to prove that the $2$-neighbour sets $\P_2$ and $C_2$ are equal and that $\P$ is properly contained in $C$. It then follows that $\Aut(C)$ fixes $\P_2$ but not $\P$, since $\Aut(C)$ contains the translations by any codeword. Thus $\P$ is $2$-elusive.

 First, \cite[(16.12) (a) and (b)]{cameron1991designs} gives $\P\subset C$. Since $\delta(C)=4$ it follows that $\P_2\subseteq C_2$. Now, by Lemma~\ref{neighbourset}, $C$ has covering radius $2$ and dimension $2^{2d}-2d-1$. Hence $H(2^{2d},2)= C\cup C_1\cup C_2$. This gives
 \begin{align*}
  |C_2| &=\,|H(2^{2d},2)|-|C|-|C_1|\\
  &=\,2^{2^{2d}}-2^{2^{2d}-2d-1}-2^{2^{2d}-2d-1}\cdot 2^{2d}\\
  &=\,2^{2^{2d}-1}-2^{2^{2d}-2d-1}.
 \end{align*}
 Furthermore, by \cite[(16.16)]{cameron1991designs}, $\P$ has minimum distance $6$ so is properly contained in $C$. This also gives,
 \begin{align*}
  |\P_2| &=\,|\P|\binom{m}{2}(q-1)^2\\
  &=\,2^{2^{2d}-4d}2^{2d-1}(2^{2d}-1)\\
  &=\,2^{2^{2d}-1}-2^{2^{2d}-2d-1}.\qedhere
 \end{align*}
\end{proof}

\begin{corollary}\label{designprep}
 Let $\b 0\in \P(2d)$ and $x\in \Aut(C_2)\setminus \Aut(C)$. Then $\varGamma_4(\b 0)\cap \P(2d)^x$ is an $S(2,4,2^{2d})$ Steiner system.
\end{corollary}

\begin{proof}
 This follows from Theorem~\ref{design} and Lemma~\ref{elusiveprep}.
\end{proof}

There exists a $3$-$(22,6,1)$-design, namely the Witt design $W_{22}$. This suggests an elusive code with these parameters may exist. Indeed, taking the even weight subcode of the binary perfect Golay code $\mathcal{G}_{23}$ and puncturing the resulting code produces a $3$-elusive code.

\begin{proposition}\label{ewpgolaycode}
 Let $\mathcal{PG}$ and $\mathcal{EG}$ be the codes obtained by puncturing the binary perfect Golay code $\mathcal{G}_{23}$ and the even weight subcode of the Golay code $\mathcal{G}_{23}$, respectively. Then $\mathcal{PG}_3=\mathcal{EG}_3$ and $\mathcal{EG}$ is $3$-elusive with minimum distance $\delta=7$.
\end{proposition}

\begin{proof}
 Now $\mathcal{G}_{23}$ is a linear $[23,12,7]$ code with covering radius $3$, and $\PermAut(\G)^M\cong M_{23}$ is transitive on $M$. Thus, puncturing $\mathcal{G}_{23}$ results in the linear $[22,12,6]$ code $\PG$ with covering radius $\rho=3$. The even weight subcode of $\G$ is a linear $[23,11,8]$ code, again with $M_{23}$ acting as pure permutations on entries, so puncturing results in the $[22,11,7]$ code $\EG$.

 Since $\PG$ has covering radius $3$ and minimum distance $6$ it follows that $V\varGamma=\PG\cup \PG_1\cup \PG_2\cup \PG_3$, where this union is disjoint. So,
 \begin{align*}
  |\PG_3|= & |V\varGamma|-|\PG|-|\PG_1|-|\PG_2|\\
  = & 2^{22}-2^{12}-2^{12}\cdot 22-2^{12}\cdot \frac{22\cdot 21}{2}\\
  = & 2^{12}(2^{10}-1-22-11\cdot 21)\\
  = & 2^{13}\cdot 5\cdot 7\cdot 11.
 \end{align*}

 Now, $\EG$ has minimum distance $7$, so $|\EG_3|=2^{11}\cdot 22\cdot 21\cdot 20/6=2^{13}\cdot 5\cdot 7\cdot 11=|\PG_3|$. Since $\PG$ is linear, any translation by a vertex in $\PG\setminus\EG$ fixes $\PG_3=\EG_3$. However this automorphism is not an element of $\Aut(\EG)$.
\end{proof}

%

\begin{thebibliography}{10}

\bibitem{assmus1994designs}
E.~F. Assmus and J.~D. Key.
\newblock {\em Designs and their Codes}, volume 103 of {\em Cambridge Tracts in
  Mathematics}.
\newblock Cambridge University Press, 1994.

\bibitem{berger1993automorphism}
T.~Berger and P.~Charpin.
\newblock The automorphism group of generalized {R}eed-{M}uller codes.
\newblock {\em Discrete mathematics}, 117(1):1--17, 1993.

\bibitem{borges2019completely}
J.~{Borges}, J.~{Rif{\`a}}, and V.~A. {Zinoviev}.
\newblock On completely regular codes.
\newblock {\em Problems of Information Transmission}, 55(1):1--45, 2019.

\bibitem{brouwersElusive}
A.~E. Brouwer.
\newblock Private communication.
\newblock Recieved: 10-04-2014.

\bibitem{brouwer}
A.~E. Brouwer, A.~M. Cohen, and A.~Neumaier.
\newblock {\em Distance-Regular Graphs}, volume~18 of {\em Ergebnisse der
  Mathematik und ihrer Grenzgebiete (3) [Results in Mathematics and Related
  Areas (3)]}.
\newblock Springer-Verlag, Berlin, 1989.

\bibitem{cameron1991designs}
P.~J. Cameron and J.~H. van Lint.
\newblock {\em Designs, Graphs, Codes and Their Links}.
\newblock London Mathematical Society Student Texts. Cambridge University
  Press, 1991.

\bibitem{carlet2006hyper}
C.~Carlet and P.~Gaborit.
\newblock Hyper-bent functions and cyclic codes.
\newblock {\em Journal of Combinatorial Theory, Series A}, 113(3):466--482,
  2006.

\bibitem{cvetkovic2008eigenspaces}
D.~Cvetkovic, P.~Rowlinson, and S.~Simic.
\newblock {\em Eigenspaces of Graphs}.
\newblock Encyclopedia of Mathematics and its Applications. Cambridge
  University Press, 2008.

\bibitem{delsarte1973algebraic}
P.~Delsarte.
\newblock {\em An Algebraic Approach to the Association Schemes of Coding
  Theory}.
\newblock Philips research reports: Supplements. N.~V.~Philips'
  Gloeilampenfabrieken, 1973.

\bibitem{neilphd}
N.~I. Gillespie.
\newblock {\em Neighbour transitivity on codes in {H}amming graphs}.
\newblock PhD thesis, The University of Western Australia, Perth, Australia,
  2011.

\bibitem{ef2nt}
N.~I. Gillespie, M.~Giudici, D.~R. Hawtin, and C.~E. Praeger.
\newblock Entry-faithful 2-neighbour transitive codes.
\newblock {\em Designs, Codes and Cryptography}, 79(3):549--564, 2016.

\bibitem{aas2nt}
N.~I. Gillespie and D.~R. Hawtin.
\newblock Alphabet-almost-simple $2$-neighbour-transitive codes.
\newblock {\em Ars Mathematica Contemporanea}, 14(2):345--357, 2017.

\bibitem{structure}
N.~I. Gillespie, D.~R. Hawtin, and C.~E. Praeger.
\newblock The structure of elusive codes in {H}amming graphs.
\newblock {\em Preprint}, (arXiv:1404.0950v1), 2014.

\bibitem{ondimblock}
N.~I. Gillespie, D.~R. Hawtin, and C.~E. Praeger.
\newblock $2$-{N}eighbour-transitive codes with small blocks of imprimitivity.
\newblock {\em Preprint}, (arXiv:1806.10514), June 2018.

\bibitem{ntrcodes}
N.~I. Gillespie and C.~E. Praeger.
\newblock Neighbour transitivity on codes in {H}amming graphs.
\newblock {\em Designs, Codes and Cryptography}, 67(3):385--393, 2013.

\bibitem{giudici1999completely}
M.~Giudici and C.~E. Praeger.
\newblock Completely transitive codes in {H}amming graphs.
\newblock {\em European Journal of Combinatorics}, 20(7):647--662, 1999.

\bibitem{elusive}
D.~R. Hawtin, N.~I. Gillespie, and C.~E. Praeger.
\newblock Elusive codes in {H}amming graphs.
\newblock {\em Bulletin of the Australian Mathematical Society}, 88:286--296,
  2013.

\bibitem{minimal2nt}
D.~R. Hawtin and C.~E. Praeger.
\newblock Minimal binary $2$-neighbour-transitive codes.
\newblock {\em Journal of Combinatorial Theory, Series A}, 171, 2020.

\bibitem{macwilliams1978theory}
F.~J. MacWilliams and N.~J.~A. Sloane.
\newblock {\em The Theory of Error Correcting Codes}.
\newblock North-Holland Mathematical Library. North-Holland, 1978.

\bibitem{tokareva2015bent}
N.~Tokareva.
\newblock {\em Bent Functions: Results and Applications to Cryptography}.
\newblock Elsevier Science, 2015.

\end{thebibliography}
\end{document}